\documentclass[9pt]{extarticle}
\usepackage{amsmath, amssymb, amsthm, relsize, extsizes}

\theoremstyle{plain}
\newtheorem{theorem}{Theorem}[section]

\theoremstyle{definition}
\newtheorem{definition}[theorem]{Definition}

\DeclareMathOperator*{\OmSum}{\mathlarger{\mathlarger{\Omega}}}

\begin{document}
\title{The Compositional Integral\\A Brief Introduction}
\author{James David Nixon\\
	JmsNxn92@gmail.com\\
	University of Toronto}

\maketitle

\begin{abstract}
The Compositional Integral is defined, formally constructed, and discussed. A direct generalization of Riemann's construction of the integral; it is intended as an alternative way of looking at First Order Differential Equations. This brief notice aims to: familiarize the reader with a different approach to integration, fabricate a notation for a modified integral, and express a startling use for infinitely nested compositions. Taking inspiration from Euler's Method for approximating First Order Differential Equations, we affiliate the method with Riemann Sums; and look at it from a different, modern angle.
\end{abstract}

\emph{Keywords}: Real Analysis, First Order Differential Equations, Riemann Integration, Infinitely Nested Compositions\\

2010 Mathematics Subject Classification. 26B10; 34A05.\\

\section{Introduction}\label{sec1}
\setcounter{equation}{0}

As an undergraduate student in mathematics we’ve all encountered (or will encounter) a theorem by Picard and Lindel\"{o}f. Though this theorem is named for \'{E}mile Picard and Ernst Lindel\"{o}f, its history traces from Augustin-Louis Cauchy, to Leonhard Euler, to Isaac Newton--and through much of 17th to 19th century mathematics. But as ingenious Picard and Lindel\"{o}f’s theorem is, it is non-productive; insofar as it does not \emph{produce} the function in any feasible manner. For a more detailed look at the history and development of differential equations, refer to \cite{HisODEMisc, HisODECaj, HisODEInce, HisODESas}; where arguably the climax of classical contributions to the theory of differential equations is Picard and Lindel\"{o}f's eponymous theorem. This paper aims to look at First Order Differential Equations from a different perspective; like the Necker cube, differential equations can be viewed in more ways than one.\\

The brute can summarize Picard and Lindel\"{o}f's theorem in a few key words. For $|x-x_0| < \delta$ with $\delta > 0$ small enough, and for $f(x, t)$ a Lipschitz continuous function, the mapping:

\[
\Phi u = y_0 + \int_{x_0}^x f(s,u(s))\,ds
\]

is a contraction. Therefore, by The Banach Fixed Point Theorem\footnote{This theorem will most likely be the first non-trivial case where a student encounters a use for The Banach Fixed Point Theorem.} it has a unique fixed point $y$, which inherently satisfies $y(x_0) = y_0$ and $\Phi y = y$, which reduces to $y'(x) = f(x, y(x))$.\\

This provides us with a local solution to the differential equation $y' = f(x,y)$ subject to the constraint $y(x_0) = y_0$. It also ensures this solution is unique, thanks to our use of The Banach Fixed Point Theorem; and that it is continuously differentiable. A more in depth look at The Picard-Lindel\"{o}f Theorem can be found in \cite{TheOdDEq}. 

The author would like to expand this theorem. This theorem does not necessarily help us produce the solution $y$–it only informs us it exists and is unique.  Even though Picard and Lindel\"{o}f's theorem is considered constructive--the author feels it isn't productive. The result is restricted to tiny intervals about $x_0$, making us unsure of where the iteration of $\Phi$ actually converges--simply that it must somewhere. Numerical calculations of $y$ using iterations of $\Phi$ is also unfeasible in practice.

In no different a manner than how the Riemann Integral is \emph{productive}, we should have something similar for $y$. There is a Riemann Sum of $g$ which converges to an object $G$ and that object satisfies $G'(x) = g(x)$. What if there was the same thing for the equation $y' = f(x, y(x))$? A kind of ``Picard-Lindel\"{o}f Integral." We should have some \emph{thing} and this \emph{thing} converges to $y$ everywhere; and in a useful productive sense. 

This \emph{thing} should also behave in a manner similar to our usual notion of an integral. This \emph{thing} should be accessible and intuitive like the Riemann Sum. And above all, this thing should look and \emph{feel} like an integral. The author will argue that Euler had already met these criteria in the 18th century. He simply worded it in a language of infinitesimals.\\

We are going to start with a formal calculus and prove some less than obvious facts about it. The author will then propose a way of actually constructing this formal calculus. In order to do this, the author will be brash and introduce a new notation known as the differential bullet product. This will take some persuasion. He maintains though, that the summation of this paper is the introduction of a formal calculus, The Compositional Integral, and an argument for a notation which describes said idea.

The methodology of our proposed, new kind of integral, is vastly similar to Euler's Method for approximating First Order Differential Equations. The idea is to make this approximation notion more precise, and re-approach it as a modified Riemann Sum--and describe some properties which, as the author would put it, have been overlooked. We are also going to steal Leibniz's notation for the integral, and twist it a bit.

However, and this is a strong however, the integral was originally a formal idea. It took much work to make it anything less than formal. A Riemann Sum, as beautiful and practical as it is, still couldn’t accomplish a victory over the formal meaning. The integral is a mysterious thing. The author cannot fully construct The Compositional Integral to the extent mathematician’s have constructed modern integration. He simply wishes to discuss the formal object, and give a concrete instance where it works. Therein, the majority of this paper will be formal arguments. He hopes more knowledgeable mathematicians of measure theory and Lesbesgue's theory of integration will have something more interesting to say.

\section{So, what is The Compositional Integral?} \label{sec2}
\setcounter{equation}{0}

Let’s borrow Leibniz's notation for the integral. The Compositional Integral can be introduced modestly in a less than avant-garde fashion. The notation may look a little clunky, but the pieces fit together rather tightly.

Let $b \ge a$, supposing $f(s,t)$ is a \emph{nice} function\footnote{Bear with the author, as what we mean by \emph{nice} will have to be filled in as we progress.}, write:

\begin{equation}\label{eq:1A}
Y_{ba}(t) = \int_a^b f(s, t)\, ds \bullet t
\end{equation}

To get what this expression means will be the point of this paper. And if the reader can absorb what this expression means, they can absorb the thesis of this paper. The author’s goal is to acclimatize the reader slowly with this notation. But the author will simply start with the denotion $Y_{ba}(t)$.\\

\begin{definition}[The Compositional Integral]\label{def1}
The Compositional Integral:

\[
y(x) = Y_{xa}(t) = \int_a^x f(s, t)\, ds \bullet t
\]

is the \emph{unique}\footnote{Although one would usually have to show $y$ is unique, by The Picard-Lindel\"{o}f Theorem it certainly is if, for instance, $f$ is globally Lipschitz on its domain. We include uniqueness in the definition for convenience.} function $y$ such that $y'(x) = f(x, y(x))$ and $y(a) = t$.
\end{definition}

This is a bit of a mouthful, and imprecise on domains, but the imprecision of this definition is warranted. This definition is made the way it is to introduce more simply what the author calls the formal semi-group laws; where $a \le b \le c$:

\begin{enumerate}
	\item $Y_{aa}(t) = t$\\
	\item $Y_{cb}(Y_{ba}(t)) = Y_{ca}(t)$
\end{enumerate}

These laws comprise a modified additivity condition of the usual integral $\int_b^c + \int_a^b = \int_a^c$ and $\int_a^a = 0$. Where now addition is replaced with composition--and we have a semi-group-structure rather than a group-structure (at least for now). 

As a brief digression, to gather some intuition; if we were to let $f(s,t)= f(s)$ be constant in $t$, then the differential equation would reduce to $y'(x) = f(x)$ and $y(a) = t$. The Compositional Integral becomes the integral. That is to mean $y(x) = t + \int_a^xf(s)\,ds$, and $Y_{ba}(t) = t + \int_a^b f(s)\,ds$.  The composition law ($2$) across $t$ becomes the usual additivity condition of the integral--albeit written a bit strangely. Of which, the constant of integration plays a more prominent role as an argument of a function.

There is not much more than a trick to proving ($1$) and ($2$). We will restrict ourselves to a formal proof that breaks down the mechanism of it purely from Definition \ref{def1}. The following argument really only works for well behaved $f$, which we avoid describing as it may muddy the initial intuition. But the reader may guess that it is something like a \emph{nice} global Lipschitz condition.

\begin{theorem}\label{thm1} 
Let $Y_{ba}(t)$ be The Compositional Integral of $f(s,t)$; then the following group laws are satisfied:

\begin{enumerate}
	\item $Y_{aa}(t) = t$\\
	\item $Y_{cb}(Y_{ba}(t)) = Y_{ca}(t)$\\
\end{enumerate}

for all $a \le b \le c$.
\end{theorem}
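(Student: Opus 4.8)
The plan is to read both identities straight off the defining property in Definition~\ref{def1}: that $x \mapsto Y_{xa}(t)$ is \emph{the} solution of the initial value problem $y' = f(x,y)$, $y(a) = t$. Throughout I assume whatever regularity on $f$ (a global Lipschitz condition in the second variable, plus enough control that the solution extends across all of $[a,c]$) makes the word ``unique'' in Definition~\ref{def1} legitimate; this is exactly the ``nice'' hypothesis alluded to in the surrounding text, and I would flag it explicitly as the cost of the formal argument.

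For identity (1), write $y(x) = Y_{xa}(t)$. The definition gives $y(a) = t$, and since $Y_{aa}(t)$ is, by the notation, the value $y(a)$, there is nothing to prove: $Y_{aa}(t) = t$.

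For identity (2), fix $a \le b \le c$ and $t$, and let $y(x) = Y_{xa}(t)$, so that $y' = f(x,y)$ on $[a,c]$, $y(a) = t$, and in particular $Y_{ba}(t) = y(b)$ and $Y_{ca}(t) = y(c)$. The key move is to observe that the restriction of $y$ to $[b,c]$ is itself a solution of $z' = f(x,z)$ with initial condition $z(b) = y(b) = Y_{ba}(t)$. By the uniqueness built into Definition~\ref{def1}, this restriction must coincide on $[b,c]$ with $x \mapsto Y_{xb}\big(Y_{ba}(t)\big)$. Evaluating at $x = c$ gives
\[
Y_{cb}\big(Y_{ba}(t)\big) = y(c) = Y_{ca}(t),
\]
which is (2). (Equivalently, in integral form one writes $Y_{ba}(t) = t + \int_a^b f(s,y(s))\,ds$ and applies the additivity $\int_a^b + \int_b^c = \int_a^c$ to the integrand $f(s,y(s))$, arriving at the same conclusion once one knows $y$ is the common solution on both subintervals.)

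The only genuine content — and the step I expect to be the main obstacle to upgrading this from a formal argument to a rigorous one — is the ``gluing'': that a solution on $[a,c]$ with prescribed value at $a$ restricts to \emph{the} solution on $[b,c]$ carrying the induced value at $b$. That is precisely uniqueness for the ODE, which is why a Lipschitz-type hypothesis on $f$ cannot be dropped, and why existence of the solution on the whole interval $[a,c]$ — not merely on a small neighbourhood of $a$, which is all Picard--Lindel\"{o}f by itself supplies — has to be assumed or established separately. Everything else is bookkeeping with the $Y_{ba}$ notation.
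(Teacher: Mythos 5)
Your argument is correct and is essentially the paper's own proof: both identities are read off from the defining initial value problem, and (2) follows by observing that $Y_{xa}(t)$ and $Y_{xb}(Y_{ba}(t))$ solve the same ODE with the same value at $b$, hence coincide by uniqueness. Your added remarks about needing global existence on $[a,c]$ and the Lipschitz hypothesis only make explicit the ``nice $f$'' caveat the paper itself acknowledges.
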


\begin{proof}
Using the definition of $Y$ we can play a few tricks to get our result. When $y(x)= Y_{xa}(t)$ then $y(a) = t$ by definition and so ($1$) is satisfied by first principles. Proving ($2$) is more wordplay then anything. Firstly $u = Y_{xb}(Y_{ba}(t))$ is the unique function such that $u(b) = Y_{bb}(Y_{ba}(t))= Y_{ba}(t)$ (by ($1$)) and $u'(x) = f(x, u(x))$. Similarly though, the function $w = Y_{xa}(t)$ is the unique function such that $w(b) = Y_{ba}(t)$ and $w'(x) = f(x, w(x))$. Therefore they must equal, $w = u$. Plugging in $x = c$ gives the result.
\end{proof}

The majority of this theorem relied on the uniqueness of a solution to a First Order Differential Equation; where again $f$ is \emph{nice}. This allowed for an identity principle, which was used as the cornerstone of this theorem. It isn't very hard to imagine the cases where $f$ is suitable and this argument works--again, something like a \emph{nice} global Lipschitz condition.\\

Now, this identity alone does not justify considering this object an integral. Luckily, there's more hidden to the proposed notation. Considering the usual integral, there is an iconic ability to substitute variables using Leibniz's differential calculus. If $u = \gamma(s)$, $du = \gamma'(s)ds$ and $u(\alpha) = a$ and $u(\beta) = b$ then,

\[
\int_a^b f(s)\,ds = \int_\alpha^\beta f(u)\, du = \int_\alpha^\beta f(\gamma(s))\gamma'(s)\,ds
\]

This leads us to the next nice fact about The Compositional Integral, and hints more aggressively as to the usefulness of the proposed notation. The same substitution of variables is still perfectly valid.

\[
\int_a^b f(s,t) \,ds\bullet t = \int_{\alpha}^\beta f(u,t) \,du \bullet t = \int_\alpha^\beta f(\gamma(s),t)\gamma'(s)\,ds\bullet t
\]

Remembering the definition of The Compositional Integral, this can be shown using the less than startling identity:

\[
\frac{d}{dx} y(\gamma(x)) = f(\gamma(x),y(\gamma(x)))\gamma'(x)
\]

More thoroughly, the function $w(x) = y(\gamma(x))$ is the \emph{unique} function such that $w'(x) = f(\gamma(x),w(x))\gamma'(x)$ and $w(\alpha) = y(\gamma(\alpha)) = y(a) = t$. Therefore $w(x) = \int_\alpha^x f(\gamma(s),t)\gamma'(s)\,ds\bullet t$. Similarly $w(\beta) = y(b)$. Since $y(b) = Y_{ba}(t)$, we must have:

\[
Y_{ba}(t) = \int_\alpha^\beta f(\gamma(s),t)\gamma'(s)\,ds\bullet t
\]

This also explicitly constructs the inverse for each element of the semi-group. Meaning, The Compositional Integral forms a group under composition. The function $Y_{ab}(t) = Y_{ba}^{-1}(t)$ which allows $Y_{ab}(Y_{ba}) = Y_{aa} = t$. Theorem \ref{thm1} hypothetically shows this, but it may be unclear what $Y_{ab}$ means when $a \le b$. Using Leibniz's rule of substitution where $u(s) = b+a - s$ with $u(a) = b$ and $u(b) = a$, the meaning of $Y_{ab}$ can be clarified by the identity:

\[
Y_{ab}(t) = \int_b^a f(s,t)\,ds \bullet t = \int_a^b f(u,t)\,du \bullet t = \int_a^b -f(b+a-s,t)\,ds \bullet t
\]

Therefore:

\[
Y_{ab}(t) = Y^{-1}_{ba}(t) = \int_a^b -f(b+a-s,t)\,ds \bullet t
\]

This leaves us with the conception that not only does $Y_{ba}(t)$ have a semi-group-structure for $a \le b$, it has a group-structure when we remove the restriction $a \le b$; the inverse of $Y_{ba}$ is $Y_{ab}$, which can be described using a substitution of variables.\\

The last facet of The Compositional Integral is perhaps the most important. The Compositional Integral can be constructed using something looking like a Riemann Sum. We will devote the majority of this brief paper justifying this.

The group structure of $Y_{ba}$ can be used to construct $Y_{ba}$. It is beneficial to think accurately about Euler's Method in the following argument. If throwing in one's hat as to whom truly deserves priority over all the ideas in this paper; the author feels the entirety of this paper could probably be attributed to Euler, it's simply that he said it differently.

Let $P = \{s_i\}_{i=0}^n$ be a partition of $[a, b]$ written in descending order (this can cause a bit of a trip up). That is to say $b = s_0 > s_1 > s_2 > ... > s_n = a$. Let's also write $Y_{\Delta s_i} = Y_{s_is_{i+1}}$ then by the group law $Y_{ca} = Y_{cb}(Y_{ba})$:

\[
Y_{ba} = Y_{bs_1}(Y_{s_1s_2}(Y_{s_2s_3}(...Y_{s_{n-1}a}))) = \prod_i Y_{\Delta s_i} 
\]

Where the product is taken to mean composition. As $\Delta s_i = s_i -s_{i+1}$ tends to zero these $Y_{\Delta s_i} \to t$, which follows because $Y_{aa}(t) = t$. We know something stronger though, we know as $s_i,s_{i+1} \to s_{i}^*$ that $\dfrac{Y_{\Delta s_i} - t}{\Delta s_i} \to f(s_i^*, t)$ which is just the differential equation used to define $Y$. This implies each $Y_{\Delta s_i}$ looks like $t + f(s_i^*, t)\Delta s_i$, up to an error of order $\mathcal{O}(\Delta s_i^2)$. To make an educated guess then: shouldn't the $\mathcal{O}(\Delta s_i^2)$ part be negligible as the partition gets finer? It should be safe to write:

\[
Y_{ba} = \lim_{\Delta s_i \to 0} \prod_i t + f(s_i^*, t)\Delta s_i
\]

We may write this in a form that would be familiar to more Classical Analysts. Let $\Delta s_i \to ds$ and write:

\begin{equation}
Y_{ba} = \prod t + f(s, t)\, ds\\
\label{eq:1}
\end{equation}

Although this may seem unusual; this is no more than the combination of Euler's Method and Riemann Sums with their language of partitions. For context, Euler would usually fix $\Delta s_i$ small and approximate $Y_{ba}$ as we let $b$ grow. We are fixing $b$, but letting $\Delta s_i \to ds$ and thinking of this as a Riemann Sum. But then again, Classical Analysts never saw the need for a Riemann Sum, it was just how infinitesimals worked.

\section{But where does the $\bullet$ come from?}\label{sec3}
\setcounter{equation}{0} 

If the reader leaves this paper with one thing, it is hopefully an understanding of the differential bullet product $ds \bullet t$. And what it means when combined with the integral. The expression $\int ds \bullet t$ behaves similarly to the expression $\int ds$ (they both satisfy: a group structure, substitution of variables, and a First Order Differential Equation).

But, what does the differential bullet product mean? That’s a tough question to answer without sufficient context. The author will boil it down into one thing. We cannot use the notation $\prod$ to represent nested compositions. Notation must represent clearly and precisely. We’ll need a notation for iterated compositions, and we’ll have to be clear. The author chooses the symbol $\OmSum$.

If $h_j$ is a sequence of functions taking some interval to itself. The expression $\OmSum_{j=0}^n h_j (t)$ can be understood to mean:

\[
\OmSum_{j=0}^n h_j (t) = h_0(h_1(h_2(...h_n(t))))
\]

No different than Euler’s notation for products and sums, except, we’ll need some additional notation in the spirit of Leibniz. When our function $h_j$ depends on another variable which is not being composed across, our notation becomes unclear. Insofar, when we write:

\[
\OmSum_{j=0}^n h_j (s, t)
\]

Does this mean we compose across $t$?

\[
h_0(s, h_1(s, ...h_n(s, t)))
\]

Or does this mean we compose across $s$?

\[
h_0(h_1(...h_n(s, t)..., t), t)
\]

This is no different a problem than when an undergraduate writes $\int e^{st}$ and the professor is expected to guess whether the integration is across $s$ or $t$. It becomes unclear whether one means $\int e^{st}\,ds$ or $\int e^{st} \, dt$. To reconcile the situation we’re going to use a bullet. Therein, the above expressions can be written more clearly:

\[
\OmSum_{j=0}^n h_j (s, t) \bullet t = h_0(s, h_1(s, ...h_n(s, t)))
\]

\[
\OmSum_{j=0}^n h_j (s, t) \bullet s = h_0(h_1(...h_n(s, t)..., t), t)
\]

So when the author uses a bullet ($\bullet$) followed by a variable, it is to mean that the operation is bound to the variable. Specifically compositions are across this variable. 

Returning to our discussion from above, we arrive at a more playful denotion of The Compositional Integral. Let $P = \{s_i\}_{i=0}^n$ be a partition of $[a, b]$ (written in descending order) with $s_{i+1} \le s_i^* \le s_i$, then we can write, and aim to justify:

\[
\prod_i t + f(s_i^*,t)\Delta s_i = \OmSum_{i=0}^{n-1} t + f(s_i^*, t)\Delta s_i \bullet t \approx Y_{ba}(t)
\]

This is essentially the statement of Euler's Method as it's stated today, and it's still used by numerical approximation algorithms. Traditionally one would write this calculation a bit more sequentially:

\begin{eqnarray*}
t_0 &=& t\\
t_1 &=& t_0 + f(s_{n-1}^*,t_0)\Delta s_{n-1}\\
t_2 &=& t_1 + f(s_{n-2}^*,t_1)\Delta s_{n-2}\\
&\vdots&\\
t_n &=& t_{n-1} + f(s_0^*,t_{n-1})\Delta s_0 \approx Y_{ba}(t)\\
\end{eqnarray*}

The benefit of this notation is that $t_{n+1} \approx Y_{(b+\Delta)a}$ and $t_{n+2} \approx Y_{(b+2\Delta)a}$, and we can think of this as a sequence, or a process, which continues to approximate. The main proposition of our altered form, is this becomes equality as $\Delta s_i \to 0$ (and $n\to\infty$), but $b$ is fixed, and isn't allowed to vary. This isn't much of a leap of faith considering the vast amount of numerical evidence which supports this claim; and lays at the foundation of numerical approximation algorithms. 

At this point, the notation can be rephrased; the notation of Section 2 can be better motivated. Let $\Delta s_i \to ds$, the summatory part $\OmSum t + $ becomes an $\int$. This is a continuous, infinitesimal, composition; similar to a continuous sum. It becomes a sweep of $f(s,t)$ for $s \in [a,b]$ across $t$; which we write as $ds \bullet t$. Again, this is something like a Riemann Sum... but it's an infinitely nested composition. The bounds on the integral can be made explicit, and it leaves us with the expression:

\[
\lim_{\Delta s_i \to ds}\OmSum_{i=0}^{n-1} t + f(s_i^*, t)\Delta s_i \bullet t = \int_a^b f(s, t)\, ds \bullet t
\]

Which is what the author means by the differential bullet product. We specifically call it a product as our group law $Y_{cb}(Y_{ba}(t)) = Y_{ca}$ can be written as the product of integrals:

\[
\int_{a}^c f(s,t)\,ds\bullet t = \int_{b}^c f(s,t)\,ds\bullet \int_{a}^b f(s,t)\,ds\bullet t
\]

The bullet is composition. We choose a bullet for this product of integrals, rather than the traditional symbol $\circ$, as to specify the composition is across $t$; and to emphasize the group-structure.\\

It is helpful to note when $f(s, t) = f(s)$ is constant in $t$, then $t + f(s)$ is a translation and the compositions above revert to addition. Illustrated by the formal manipulations,

\begin{eqnarray*}
\int_a^b f(s) ds \bullet t &=& \lim_{\Delta s_i\to0}\OmSum_{i=0}^{n-1} t + f(s_i^*)\Delta s_i \bullet t\\
&=& \lim_{\Delta s_i \to 0} \big{(}t + \sum_{i=0}^{n-1} f(s_i^*)\Delta s_i\big{)}\\
&=& t + \int_a^b f(s)\, ds\\
\end{eqnarray*}

We are reduced to the usual Riemann Sum definition of an integral. Furthermore we can explicitly see $t$ as the constant of integration. The additivity of integrals is again the composition law 

\[
Y_{cb}(Y_{ba}(t)) = t + \int_b^c f(s)\,ds + \int_a^b f(s)\,ds = t + \int_a^c f(s)\,ds = Y_{ca}(t)
\]

\section{Approaching from the other side of the equation}\label{sec4}
\setcounter{equation}{0}

Continuing with the same idea, we are going to approach from the other side of the equation. We will start with our (or Euler's?) proposed definition of $Y$ and argue that it is $Y$. To separate the objects as two different things we will call the proposed definition $\widetilde{Y}$. The aim is to formally argue and motivate $Y = \widetilde{Y}$, but we will not attempt to prove it yet. The benefit of this side, is to construct $\widetilde{Y}$ first, and provide a constructive/productive form of Picard and Lindel\"{o}f's Theorem.

Recalling the proposed definition: if $P = \{s_i\}_{i=0}^n$ is a partition of $[a, b]$ in descending order, and $s_{i+1} \le s_i^* \le s_i$:

\[
\widetilde{Y}_{ba}(t) = \lim_{\Delta s_i\to 0} \OmSum_{i=0}^{n-1} t + f(s_i^*, t) \Delta s_i \bullet t
\]

We are going to take a leap of faith momentarily and assume this expression converges uniformly. Some things about $\widetilde{Y}$ are simple to prove off hand, but $\widetilde{Y}$ may seem so foreign to navigate, the reader may not know where to look. So to start slow, the semi-group laws from before hold. Firstly, $\widetilde{Y}_{aa}(t) = t$ as this becomes the null composition which is the identity value $\text{Id} = t$. It is helpful to think about how the null sum is $0$, and the null product is $1$. More importantly, $\widetilde{Y}_{cb}(\widetilde{Y}_{ba}) = \widetilde{Y}_{ca}$ which is worth while to the reader for the author to write out.\\

\emph{A proof-sketch that $\widetilde{Y}_{cb}(\widetilde{Y}_{ba}) = \widetilde{Y}_{ca}$:} Let $P = \{s_i\}_{i=0}^n$ be a partition of $[a, b]$ in descending order, with $s_{i+1} \le s_i^* \le s_i$; and let $R = \{r_j\}_{j=0}^m$ be a partition of $[b, c]$ in descending order, with $r_{j+1} \le r_j^* \le r_j$. Then,

\begin{eqnarray*}
\widetilde{Y}_{cb}(\widetilde{Y}_{ba}) &=& \lim_{\Delta r_j\to 0} \OmSum_{j=0}^{m-1} t + f(r_j^*, t) \Delta r_j \bullet \lim_{\Delta s_i\to 0}\OmSum_{i=0}^{n-1} t + f(s_i^*, t)\Delta s_i \bullet t\\
&=& \lim_{\Delta r_j\to 0}  \lim_{\Delta s_i\to 0} \OmSum_{j=0}^{m-1} t + f(r_j^*, t) \Delta r_j \bullet\OmSum_{i=0}^{n-1} t + f(s_i^*, t)\Delta s_i \bullet t\\
&=& \lim_{\Delta q_k\to 0}\OmSum_{k=0}^{n+m-1} t + f(q_k^*, t)\Delta q_k \bullet t\\
&=& \widetilde{Y}_{ca}\\
\end{eqnarray*}

Where $Q = P \cup R= \{q_k\}_{k=0}^{n+m}$ is a partition of $[a, c]$ written in descending order, consisting of $q_j = r_j$ for $0 \le j \le m$ and $q_{i+m} = s_i$ for $0 \le i \le n$ and similarly for $q_k^*$.\\

This is purely a formal manipulation, but the reader may care to see how a rigorous proof would evolve if these objects converge uniformly in $t$, or in some \emph{nice} way. 

Reparametrizing the composition from $[a,b]$ to $[\alpha,\beta]$ with a continuously differentiable function $\gamma$; Leibniz's substitution of variables appears. Taking $\gamma(p^*_i) = s_i^*$ and $\gamma_i = \gamma_i(p_i) = s_i$, where $\beta = p_0 > p_1 > ... > p_{n-1} > p_n = \alpha$ and $p_{i+1} \le p_i^* \le p_i$, then by an approximate mean value theorem:

\[
f(s_i^*,t) (s_i - s_{i+1}) = f(\gamma(p_i^*),t) (\gamma_i - \gamma_{i+1}) \approx f(\gamma(p_i^*),t) \gamma'(p_i^*) (p_i - p_{i+1})
\]

So that,

\[
\widetilde{Y}_{ba}(t) = \lim_{\Delta p_i \to 0} \OmSum_{i=0}^{n-1} t + f(\gamma(p_i^*),t) \gamma'(p_i^*) \Delta p_i \bullet t
\]

So, our proposed definition also admits substitution of variables. It's also nice to see that the composition behaves little differently than how Riemann Sums behave, in this instance at least.\footnote{This hints aggressively to the idea of adding measure theory to the discussion.}

To extend our group-structure, if we invert $\widetilde{Y}_{ba}$ to $\widetilde{Y}_{ba}^{-1}$, then componentwise $t + f(s_i^*,t)\Delta s_i$ gets mapped to $\approx t - f(s_i^*,t)\Delta s_i$. Since composition is non-commutative, the partition is now in ascending order, and our inverse becomes precisely $\OmSum_{i=0}^{n-1} t - f(b+a-s_i^*,t)\Delta s_i \bullet t$. This agrees with our earlier inversion formula.\\

A more difficult idea to intuit is that $\widetilde{Y}_{xa}$, using this definition, satisfies the differential equation that $Y$ satisfies: $\frac{d}{dx} \widetilde{Y}_{xa} = f(x, \widetilde{Y}_{xa})$. And that this expression actually satisfies a First Order Differential Equation and we can come full circle. The author will only use intuition to morally justify this statement, for the moment. This logical sequence is a formal use of infinitesimals. Starting with the following identity:

\[
\widetilde{Y}_{(x+dx)x}(t) = t + f(x, t)dx
\]

Which can be sussed out as ``composing an infinitesimal amount,'' or ``composing over the interval $[x,x+dx]$.'' If one can accept this malignant use of infinitesimals\footnote{As the author would argue Classical Analysts took it as fact, though they definitely wrote it differently.}, it can be expanded using our semi-group laws $\widetilde{Y}_{cb}(\widetilde{Y}_{ba}(t)) = \widetilde{Y}_{ca}(t)$, so that:

\begin{eqnarray*}
 \widetilde{Y}_{(x+dx)a} &=& \widetilde{Y}_{(x+dx)x}(\widetilde{Y}_{xa}) = \widetilde{Y}_{xa} + f(x, \widetilde{Y}_{xa}) dx\\
\frac{d \widetilde{Y}_{xa}}{dx} &=& \frac{\widetilde{Y}_{(x+dx)a} - \widetilde{Y}_{xa}}{dx}\\
&=&f(x,\widetilde{Y}_{xa})\\
\end{eqnarray*}

This kind of tells us this idea should work. If the objects converge in the best manner possible, all of this seems like a Leibnizian argument using infinitesimals.\\

The above arguments work out formally as we've written, but proving it does generally and rigorously is difficult. For that reason, we will work through a specific case. It can be illuminating to use an example, and may clear cut some of the block-ways which heed intuition on the matter. This will also give a glimpse of the difficulty of the problem in a rigorous setting.

\section{The nit and gritty}\label{sec5}
\setcounter{equation}{0}

Now that we’re caught up with the sweeping motions, we’ll work through a case in which we can do everything we just did above but with a bit more rigor. To do such, we’ll work with the function $f(x, t) = e^{-xt}$ for $x \in [0, 1]$ and $t \in \mathbb{R}^+$. And we'll try to construct The Compositional Integral of $f$. Although we've just deliberated on The Compositional Integral as a formal thing; the author has yet to construct it, or even prove its existence. We aim to prove the following theorem:

\begin{theorem}\label{thm1B}

The following two claims hold:

\begin{enumerate}
	\item For $0 \le a \le b \le 1$ and $t \in \mathbb{R}^+$, there is a unique Compositional Integral of $e^{-xt}$, denoted
	
	\[
	Y_{ba}(t) = \int_a^b e^{-st}\,ds \bullet t
	\]
	
	Where $Y_{ba}(t): \mathbb{R}^+ \to \mathbb{R}^+$. 
	
	\item Let $P = \{ s_i\}_{i=0}^{n}$ be a partition of $[a, b]$ written in descending order, with $s_{i+1} \le s_i^* \le s_i$; as $\Delta s_i = s_i - s_{i+1} \to 0$ the expression 

	\[
	\OmSum_{i=0}^{n-1}t+e^{-s_i^*t}\Delta s_i \bullet t
	\]

converges to The Compositional Integral $Y_{ba}(t)$ of $e^{-xt}$.
\end{enumerate}
\end{theorem}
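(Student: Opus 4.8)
\emph{A plan for the proof.} The plan is to get part (1) from the Picard--Lindel\"{o}f theorem together with an a priori bound, and part (2) from the classical convergence theorem for Euler's method, exploiting the fact that $f(x,t)=e^{-xt}$ is, on the strip $[0,1]\times[0,\infty)$, bounded, globally Lipschitz in $t$, and forces its solutions into a compact strip.

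\emph{Setting up (1).} The two facts that do all the work are $0<e^{-xt}\le 1$ and $\lvert\partial_t e^{-xt}\rvert = x\,e^{-xt}\le 1$ for $(x,t)\in[0,1]\times[0,\infty)$; the second says $f$ is globally Lipschitz in $t$ with constant $L=1$, uniformly in $x$. Fix $t>0$ and $a\in[0,1]$. On the closed convex set $K=\{u\in C([a,1]):t\le u(x)\le t+1\}$ the Picard map $\Phi u(x)=t+\int_a^x e^{-s\,u(s)}\,ds$ is a self-map (because $\Phi u(a)=t$ and $(\Phi u)'(x)=e^{-x\,u(x)}\in(0,1]$), and the Lipschitz estimate iterates to $\lVert\Phi^N u-\Phi^N v\rVert_\infty\le\frac{(1-a)^N}{N!}\lVert u-v\rVert_\infty$, so some $\Phi^N$ is a contraction on $K$. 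Its unique fixed point $y$ is $C^1$, takes values in $[t,t+1]\subset\mathbb{R}^+$, is increasing, and satisfies $y'=e^{-xy}$, $y(a)=t$; restricting to $[a,b]$ defines $Y_{ba}(t):\mathbb{R}^+\to\mathbb{R}^+$, and uniqueness of the solution --- hence the group laws of Theorem \ref{thm1} --- is automatic.

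\emph{Setting up (2).} Reindex the descending partition by $x_k=s_{n-k}$, so $a=x_0<x_1<\dots<x_n=b$, $h_k=x_{k+1}-x_k=\Delta s_{n-1-k}>0$, $\sigma_k:=s_{n-1-k}^*\in[x_k,x_{k+1}]$, and set $\lVert P\rVert=\max_k h_k$. Unrolling the nested composition shows $\OmSum_{i=0}^{n-1} t+e^{-s_i^* t}\Delta s_i\bullet t$ equals $t_n$, where $t_0=t$ and $t_{k+1}=t_k+e^{-\sigma_k t_k}h_k$ (note $t_k$ is increasing with $t\le t_k\le t+1$, so the iterates stay in $\mathbb{R}^+$). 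Put $e_k=t_k-y(x_k)$. Expanding $y(x_{k+1})=y(x_k)+e^{-\sigma_k y(\sigma_k)}h_k+\rho_k$ with $\lvert\rho_k\rvert\le\tfrac12 M h_k^2$ and $M:=\sup_{[a,b]}\lvert y''\rvert=\sup\bigl|(y+xy')e^{-xy}\bigr|\le t+2$, and then using $L=1$ together with $\lvert y'\rvert\le 1$, one obtains $\lvert e_{k+1}\rvert\le(1+h_k)\lvert e_k\rvert+(1+\tfrac12 M)\lVert P\rVert h_k$. Since $e_0=t-y(a)=0$, the discrete Gr\"{o}nwall inequality gives $\lvert e_n\rvert\le(1+\tfrac12 M)(b-a)e^{\,b-a}\lVert P\rVert\to 0$ as $\lVert P\rVert\to 0$; that is, $t_n\to y(b)=Y_{ba}(t)$.

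\emph{Where the difficulty lies.} Conceptually there is little mystery --- the theorem is Picard--Lindel\"{o}f plus the textbook error analysis of Euler's method --- and the care is all in bookkeeping and confinement: the partition is written in descending order, so the composition $\OmSum$ winds inward from $s_0=b$ down to $s_n=a$ while the Euler recursion marches forward from $a$ to $b$, and one must keep the exact solution and the Euler iterates inside $[t,t+1]$, which is precisely what makes $f$, $\partial_t f$ and $y''$ uniformly bounded. What is genuinely hard --- and what we are deliberately not attempting --- is the general statement without the bounded, globally-Lipschitz, trapped-in-a-strip features that $e^{-xt}$ happens to enjoy; already here $M$ and the Gr\"{o}nwall factor grow with $t$, so the convergence we prove is only pointwise (indeed locally uniform) in $t$, not uniform on all of $\mathbb{R}^+$.
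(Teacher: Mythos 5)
Your proposal is correct, and in both parts it takes a genuinely different route from the paper. For part (1) the paper invokes Picard--Lindel\"{o}f only locally (on intervals $|x-x_0|<\delta$) and then glues the local solutions along $[0,1]$ by a ``monodromy principle''; you instead run the iterated-contraction argument $\lVert\Phi^N u-\Phi^N v\rVert_\infty\le\frac{(1-a)^N}{N!}\lVert u-v\rVert_\infty$ on the invariant set $K=\{t\le u\le t+1\}$ and get global existence on $[a,1]$ in one step, which also makes the confinement to $\mathbb{R}^+$ explicit rather than an afterthought. For part (2) the two arguments run in opposite directions: the paper decomposes the \emph{exact} flow as $Y_{ba}=\OmSum_i Y_{s_is_{i+1}}$, Taylor-expands each short-time factor as $t+f(s_i^*,t)\Delta s_i+R^i_\Delta$, and pulls the $n$ error terms out through the nested composition via the rule $g(t+\mathcal{O}(\Delta^2))=g(t)+\mathcal{O}(\Delta^2)$, counting $n\cdot\mathcal{O}(1/n^2)=\mathcal{O}(1/n)$; you instead run the textbook forward-Euler error recursion $\lvert e_{k+1}\rvert\le(1+h_k)\lvert e_k\rvert+C\lVert P\rVert h_k$ and close with discrete Gr\"{o}nwall. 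Your version buys an explicit rate $\lvert e_n\rvert\le C(b-a)e^{b-a}\lVert P\rVert$ with identified constants, and it makes rigorous exactly the point the paper leaves implicit --- that the accumulated errors are controlled because the composed outer maps have Lipschitz constant at most $\prod_k(1+h_k)\le e^{b-a}$, which is the Gr\"{o}nwall factor in disguise. Your closing observation is also a genuine (minor) correction to the paper: since $y''(0)=-t$, the bound $M\le t+2$ necessarily grows with $t$, so the convergence is locally uniform but \emph{not} uniform over all $t\in\mathbb{R}^+$ as the paper parenthetically asserts; this does not affect the theorem as stated, which only requires convergence for each fixed $t$.
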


In our proof, it will be shown in one motion that $\widetilde{Y}=Y$. It will then be evident the function $\widetilde{Y}_{xa}(t)$ is the unique function $y(x)$ such that $y(a) = t$ and $y'(x) = e^{-xy(x)}$.

This provides us with the quick and justifiable statement that The Compositional Integral is a meaningful thing and looks something like a Riemann Sum, if only a Riemann Sum involved compositions... A Riemann composition, if you will. For convenience, the author will call it \emph{The Riemann Composition} of The Compositional Integral.

The proof we will provide will require some hand waving, as to write out all the steps produces a mess of equations. For this reason we will try to be short but convincing nonetheless. We will try to argue classically, but will admit much more rigor than a Classical Analyst would.

\begin{proof}
To begin, we'll prove ($1$). For all $t\in \mathbb{R}^+$ and $0 \le a \le b \le 1$ there is a function $Y_{ba}(t):\mathbb{R}^+\to\mathbb{R}^+$--The Compositional Integral of $f(x,t) = e^{-xt}$. 

To show this is an exercise in soft-analysis. For all $t_0,t_1 \in\mathbb{R}^+$ and $x \in [0,1]$ we have $|e^{-xt_0} - e^{-xt_1}| \le |t_0 - t_1|$. Therefore by The Picard-Lindel\"{o}f Theorem, for every $x_0 \in [0,1]$ there is a neighborhood $|x-x_0| < \delta$ ($\delta$ can be chosen to work for all $x_0$), where for each $ t \in \mathbb{R}^+$, we have a function $y_{t,x_0}$ in which $y_{t,x_0}'(x)= e^{-xy_{t,x_0}(x)}$ and $y_{t,x_0}(x_0) = t$. These neighborhoods $|x-x_0|<\delta$, and hence functions $y_{t,x_0}$, can be glued together. We can extend $y_{t,x_0}(x)$ from $|x-x_0| < \delta$ to $|x-x_0| < 3\delta/2$ by noticing 

\[
y_{t,x_0}(x\pm\delta/2) = y_{y_{t,x_0}(x_0\pm\delta/2),x_0}(x)
\]

Continuing this process, $y_{t,x_0}$ can be extended from $|x-x_0|<\delta$ to $x \in [0,1]$ using a monodromy principle. The presiding identity principle is not that $y$ is analytic, though. Instead $y_{t,x_0}$ satisfies the same First Order Differential Equation for each $x_0$.

To elaborate: consider two intervals $I$ and $J$ where $I \cap J \neq \emptyset$. Let $u: I \to \mathbb{R}$ and $w: J \to \mathbb{R}$. Assume $u\Big{|}_{I \cap J} = w\Big{|}_{I \cap J}$, and they satisfy the same First Order Differential Equation, $y' = e^{-xy(x)}$. By the uniqueness property of First Order Differential Equations, $u = w$ on $I \cup J$.  This monodromy principle forms $Y_{ba}(t) = y_{t,a}(b)$ for all $t\in \mathbb{R}^+$ and $0 \le a \le b \le 1$. 

Lastly, $Y_{ba}(t) : \mathbb{R}^+ \to \mathbb{R}^+$ because $Y_{aa}(t) = t \in \mathbb{R}^+$ and $Y_{xa}(t)$ is increasing in $x$ because it's derivative is greater than $0$. Theorem \ref{thm1} can now be thought of rigorously, and shows that $Y$ satisfies a group structure.\\

For our proof of ($2$), that The Riemann Composition converges to $Y_{ba}(t)$; by Taylor's theorem:

\[
Y_{ss'}(t) = t + f(s^*,t)(s-s') + R_\Delta
\]

Where here $\frac{R_\Delta}{\Delta} \to 0$ as $\Delta \to 0$, and $\Delta$ is an upper bound on $(s - s')$ where $0 \le s' \le s^* \le s \le 1$. Now $R_\Delta$ depends on $s^*, s', s$ and $t$, but we are going to throw its dependence away, as it can clutter the proof. Since we will be letting $\Delta\to 0$ its dependence on $t$ (and $s^*, s', s$) becomes irrelevant (especially because the convergence is uniform for $t \in \mathbb{R}^+$, and $0 \le s' \le s^* \le s \le 1$). Let $P= \{s_i\}_{i=0}^{n}$ be a partition of $[a,b]$ in descending order, and let $s_{i+1} \le s_i^* \le s_i$. Let $\max_{i=0,1,...,n-1} \Delta s_i=\Delta$. The following identities should illustrate the method of the proof:

\begin{eqnarray*}
Y_{ba}(t) &=& Y_{bs_1}(Y_{s_1s_2}(...Y_{s_{n-1}a}(t)))\\
&=&\OmSum_{i=0}^{n-1} Y_{s_is_{i+1}}(t)\bullet t\\
&=&\OmSum_{i=0}^{n-1} t+ f(s_i^*,t)\Delta s_i + R^{i}_\Delta \bullet t\\
&=& \OmSum_{i=0}^{n-1} t+ f(s_i^*,t)\Delta s_i\bullet t + \sum_{i=0}^{n-1} Q^i_\Delta\\
\end{eqnarray*}

Where here each $\frac{Q^i_\Delta}{\Delta} \to 0$ as $\Delta\to 0$ for all $0 \le i \le n-1$. Ignoring $R_\Delta^i$'s dependence on $t$, the  justification of this identity follows from an inductive use of the rule $g(t+\mathcal{O}(\Delta^2)) = g(t) + \mathcal{O}(\Delta^2)$--where we must be sure to count how many error terms we are adding together. This crude formalism is justified, again due to the uniform convergence of $R_\Delta^i \to 0$.

Now since $\Delta = \mathcal{O}(1/n)$ we must have $Q^i_\Delta = \mathcal{O}(1/n^2)$. We are taking the sum $\sum_{i=0}^{n-1} Q^i_\Delta$, so we can see that 

\[
\sum_{i=0}^{n-1} Q^i_\Delta = \sum_{i=0}^{n-1} \mathcal{O}(1/n^2) =  n \mathcal{O}(1/n^2) = \mathcal{O}(1/n)
\]

This allows us to write that:

\[
Y_{ba}(t) - \OmSum_{i=0}^{n-1} t + f(s_i^*,t)\Delta s_i \bullet t = \mathcal{O}(1/n) = \mathcal{O}(\Delta)
\]

And so in letting $\Delta\to 0$ (and $n\to\infty$), the LHS tends to zero and our Riemann Composition converges to The Compositional Integral of $e^{-xt}$.
\end{proof}

To summarize what was especially needed from $f$ in this argument, in our exact choice of $f(x,t) = e^{-xt}$; the mapping $f(x,t) : [0,1] \times \mathbb{R}^+ \to \mathbb{R}^+$ and therefore the nested compositions are meaningful. Secondly, it was required that the function $f(x,t) = e^{-xt}$ is globally Lipschitz continuous in $t$ on $\mathbb{R}^+$ for all $x \in [0,1]$ as this allowed for the simple argument proving the function $Y_{ba}(t)$ even exists and is unique. The global Lipschitz condition also ensured the uniform convergence of the error term $R^i_\Delta \to 0$, which allowed for the error term to be pulled through the composition so easily.

Supposing we chose another function $f$ where $t$ was restricted to some interval $[c,d]$, then this causes innumerable problems. We would need that compositions of $t+f(s_i^*,t)\Delta s_i$ are meaningful things, but this is difficult as $[c,d]$ is bounded and the composing functions may grow to a value greater than $d$, or less than $c$, and our composition may no longer make sense. Especially because of the dangling translation by $t$. We would need $t+f(s_i^*,t)\Delta s_i:[c,d] \to [c,d]$  for all $0 \le i \le n-1$ and as $\Delta s_i \to 0$, which is unnatural and quite restrictive. Avoiding this would require more clever topological arguments; they would surely not fit in the confines of this notice.

Therein, our choice of $e^{-xt}$ was very intentional, and a very special function for this argument to work. Constructing The Compositional Integral for arbitrary functions proves to be a much more difficult task, especially if the only condition we demand is that $f$ is Lipschitz. But if one takes Euler's word for it, it isn't much of a leap.

The author imagines it very plausible that The Riemann Composition converges to The Compositional Integral if all that is asked is that $f$ is Lipschitz. A proof of this would simply take longer than the space we have in this paper. And probably more expertise than the author has.

\section{In Conclusion}\label{sec6}
\setcounter{equation}{0} 

The Compositional Integral can be made into a meaningful thing. It is a stark redefinition of the Riemann Integral, and provides a productive form of The Picard-Lindel\"{o}f Theorem, in which we have some \emph{thing} and this \emph{thing} converges to the solution of a First Order Differential Equation. It also looks like the integral in more ways than one. The author has remained as curt as possible, but hopes to excise a curiousity in the reader and leave the subject open ended. What else can be done with this strange new integral? Can we add measure theory by looking at $\mu (\Delta s_i)$ rather than $\Delta s_i$ for some measure $\mu$? Can we add contour integrals by parameterizing contours $C$ in the complex plane using some differentiable arc $\gamma$? How do we take limits at infinity? Are there dominated or monotone convergence theorems? The author can only imagine.\\

And as to what we've really done in this paper, it may be fun to hint at expansions of common functions using these methods. We can express $e^x$ in a somewhat new way, or at least provide a new way of justifying the expansion. For $x,t \in \mathbb{R}^+$:

\[
t e^x= \int_{0}^x t\,ds\bullet t
\]

because $y = te^x$ satisfies $y(0) = t$ and $y'(x) = y(x)$. Interestingly, now the group structure of The Compositional Integral become the multiplicative property of $e^x$. In this special case, The Riemann Composition reduces to an identity exactly of the form $\lim_{n\to\infty}(1+\frac{x}{n})^n = e^x$--the author thinks it's one of the many ways Euler probably derived the expression. Namely if $P=\{s_i\}_{i=0}^n$ is a partition of $[0,x]$, then:

\begin{eqnarray*}
te^x &=& \lim_{\Delta s_i \to 0} \OmSum_{i=0}^{n-1} t + t\Delta s_i \bullet t\\
&=& \lim_{\Delta s_i \to 0} \OmSum_{i=0}^{n-1} t(1 + \Delta s_i) \bullet t\\
&=& \lim_{\Delta s_i \to 0} t \prod_{i=0}^{n-1}(1+\Delta s_i)\\
\end{eqnarray*}

Where here $\Delta s_i$ looks like $\frac{x}{n}$; so, $\prod_{i=0}^{n-1}(1+\Delta s_i)$ looks like $(1+\frac{x}{n})^n$. Using the same reasoning, we can generalize. The following identities written as though they are Riemann Sums, are derived in the same manner and are interesting--but are not unknown:

\begin{eqnarray*}
e^{x^2} &=& \lim_{\Delta s_i \to 0} \prod_{i=0}^{n-1} (1 + 2 s_i^*\Delta s_i) = \lim_{n\to\infty} \prod_{i=0}^{n-1} \Big{(}1 + 2 i \Big{(}\frac{x}{n}\Big{)}^2\Big{)}\\
e^{x^3} &=& \lim_{\Delta s_i \to 0} \prod_{i=0}^{n-1} (1 + 3 (s_i^*)^2\Delta s_i) = \lim_{n\to\infty} \prod_{i=0}^{n-1} \Big{(}1 + 3 i^2 \Big{(}\frac{x}{n}\Big{)}^3\Big{)}\\
&\vdots&\\
e^{x^k} &=& \lim_{\Delta s_i \to 0} \prod_{i=0}^{n-1} (1 + k (s_i^*)^{k-1}\Delta s_i) = \lim_{n\to\infty} \prod_{i=0}^{n-1} \Big{(}1 + k i^{k-1} \Big{(}\frac{x}{n}\Big{)}^{k}\Big{)}\\
\end{eqnarray*}

The following derivation is left to the reader:

\[
te^{\int_0^x p(s)\,ds} = \int_0^x p(s)t\,ds\bullet t = \lim_{\Delta s_i \to 0} t\prod_{i=0}^{n-1} (1 + p(s_i^*)\Delta s_i)
\]

Therefore The Compositional Integral of $f$ reduces to the Volterra integral of $p$ when $f(s,t) = p(s)t$ \cite{Volterra}. If I haven't convinced the reader--these identities can be proven by taking logarithms, and using the estimate $\log(1+x) \sim x$, which is the driving point of Volterra's construction.

\section*{Acknowledgement}
The author would like to thank the retired professor Dr. John Gill of Colorado state University-Pueblo. John may be better known as a revolutionary free climber, but the few e-mails and notes we passed back and forth helped me clarify exactly what I meant when I'd say to myself: ``A Riemann Sum, but instead it's like... composition.'' John essentially invented The Riemann Composition of The Picard Lindel\"{o}f Integral--and truly claims priority (though only if Euler doesn't). John chose a different nomenclature, and called it ``The Virtual Integral.'' The author came to the idea independently, but in the altered format presented in this article.  This exposition intended to: expand on John Gill's ideas; give context in the form of Euler's method; define and describe the group structure; and propose more fluid notation. For John Gill's original papers on ``The Virtual Integral,'' the author points to \cite{VirtInt1, VirtInt2}; but the abundance of his notes are sadly unpublished. The majority of his work was made known to the author through personal correspondence. Even still, a lot of his work on ResearchGate frequently makes brief mention of ``The Virtual Integral.'' The author is greatly indebted to his mathematical contributions, and his encouraging comments.

\vfill\eject
\end{document}